\numberwithin{equation}{section}
\newtheorem{theorem}{Theorem}[section]
\newtheorem{lemma}[theorem]{Lemma}
\newtheorem{remark}[theorem]{Remark}
\newcommand{\om}{{\omega}}
\title
{A note on concentration functions on transformation groups}
\author{Mehrdad Kalantar \and Mohammad S. M. Moakhar}
\address{ School of Mathematics and Statistics,
        Carleton University, Ottawa, Ontario, Canada K1S 5B6}
\email{mkalanta@math.carleton.ca}
\address{Department of Mathematics,
        Tarbiat Modares University, Tehran 14115-134, Iran}
\email{m.mojahedi@modares.ac.ir}
\subjclass[2000]{Primary 37B05; Secondary 43A10.}
\begin{document}
\maketitle

\begin{abstract}
In this note, we consider the concentration function problem for a continuous action of a locally compact group $G$ on a locally compact Hausdorff space $X$.
We prove a necessary and sufficient condition for the concentration functions of a
spread-out irreducible probability measure $\mu$ on $G$ to converge to zero.
\end{abstract}

\section{Introduction}
Let $G$ be a locally compact group, and let $\mu$ be a probability measure on $G$. 
The {\it concentration function problem} concerns 
the conditions on $G$ or $\mu$ under which the sequence
\[
\mathcal{F}_n(K) \,=\, \sup_{x\in G}\,\mu^n (Kx^{-1})
\]
converge to zero for every compact set $K \subset G$.

In \cite{HM}, Hofmann--Mukherjea partially answered this problem; 
namely they proved that the above sequence converges to zero,
when $G$ is non-compact and $\mu$ is irreducible (i.e. the semigroup generated by the support of $\mu$ is dense in $G$).
They moreover conjectured it to be true
for all such $G$ and $\mu$. After the theory of totally disconnected
groups had been developed, in \cite{JRW}, Jaworski--Rosenblatt--Willis used the theory
to prove Hofmann--Mukherjea's conjecture in full generality.

This note is a result of our attempt to understand the main properties in the above setup that forces the convergence.
In order to single out those properties of the group
responsible for the result to hold, we consider the problem in a more general setting;
we consider the problem in the setting of continuous group actions.

Suppose $G$ acts on a locally compact Hausdorff space $X$ by homeomorphisms.
For $x\in X$ and compact $K\subseteq X$, denote $K_x := \{t \in G: t x \in K\}$.
We define concentration functions of a probability measure $\mu$ on $G$ by
\begin{equation}\label {c.f.}
\mathcal{F}_n(K) \,=\, \sup_{x\in X} \, \mu^n(K_x) \hspace{1cm} (K\subseteq X \text{ is compact })
\end{equation}

The main result of this paper proves that
when $\mu$ is irreducible and spread-out, the convergence of concentration functions of $\mu$ to zero is
equivalent to the lack of $\mu$-stationary measure on the space $X$.

Recall that the probability measure $\nu$ on the $G$-space $X$ is called $\mu$-stationary if for all $\phi\in C_c(X)$,
the space of continuous functions on $X$ with compact support,
\[
\int_{X}\, \int_{G} \,{\phi(t x) \,d\mu(t) \, d\nu(x)} \,=\, \int_{X}{\phi(x)d\nu(x)} \,.
\]
If $\mu$ is an irreducible probability measure on $G$, then $G$ has a $\mu$-stationary measure if and only if $G$ is compact.
Hence, our result really shows that non-existence of stationary measures is that property of non-compact locally compact groups
which yields to the convergence of concentration functions to zero.

Before stating our main result, let us recall some definitions.
In the following, for probability measures $\mu$ and $\nu$ on $G$, $\mu * \nu$ denotes the convolution measure,
which is the probability measure on $G$ determined by
\[
\int_G\, f(r)\,d\mu*\nu(r)\,=\,\int_G\,\int_G \, f(ts) \,d\mu(t)\,d\nu(s) \,\,\,\,\,\,\,\, (f\in C_c(G))\,.
\]
The $n$-th iterated convolution power $\mu * \cdots * \mu$ ($n$ times) is denoted by $\mu^n$. 
For $\phi\in C_b(X)$, the space of bounded continuous functions on $X$, we define the function $\mu * \phi \in C_b(X)$ by
\[
\mu * \phi (x) \,:=\, \int_G \, \phi(tx) \, d\mu(t)\,.
\]
Observe that $\|\mu * \phi\|_\infty \leq \|\phi\|_\infty$.
A probability measure $\mu$ on $G$ is said to be spread-out if some convolution power $\mu^n$ is nonsingular with repeat to 
the Haar measure.

The main result of this note is the following.
\begin{theorem}\label{main}
Let $G$ be a locally compact group, and let $\mu$ be an irreducible and spread-out probability measure on $G$.
Suppose $\alpha: G \curvearrowright X$ is
a continuous action of $G$ on a locally compact Hausdorff space $X$. Then the following are equivalent:
\begin{itemize}
\item [1.]
For every compact $K\subset X$,
\[
\mathcal{F}_n(K) \,\longrightarrow \,0 \,;
\]
\item [2.]
the space $X$ admits no $\mu$-stationary probability measure.
\end{itemize}
\end{theorem}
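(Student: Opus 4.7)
The plan is to handle $(1)\Rightarrow(2)$ directly by dualizing the stationarity relation and reserve the bulk of the work for the reverse implication.

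For $(1)\Rightarrow(2)$ I would argue by contrapositive: suppose $\nu$ is a $\mu$-stationary probability measure on $X$. Iterating the defining identity gives $\int_X \mu^n * \phi\,d\nu = \int_X \phi\,d\nu$ for every $\phi\in C_c(X)$. Using inner regularity, pick a compact $K_0\subseteq X$ with $\nu(K_0)>0$, a compact neighborhood $K_1$ of $K_0$, and $\phi\in C_c(X)$ with $\mathbf{1}_{K_0}\leq \phi\leq \mathbf{1}_{K_1}$. Since $(\mu^n*\phi)(x)\leq \mu^n(\{t : tx\in K_1\})$, one obtains $\mathcal F_n(K_1)\geq \|\mu^n * \phi\|_\infty\geq \int (\mu^n * \phi)\,d\nu=\int \phi\,d\nu\geq \nu(K_0)>0$ for every $n$, contradicting (1).

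For $(2)\Rightarrow(1)$ I would again argue by contrapositive and construct a stationary probability measure out of a failure of concentration. A short calculation using the identity $K_{sx}=K_xs^{-1}$ gives $\mathcal F_{n+1}(K)=\sup_x\int \mu^n(K_{sx})\,d\mu(s)\leq \mathcal F_n(K)$, so the sequence $\mathcal F_n(K_0)$ is nonincreasing; hence if it does not tend to zero then $\mathcal F_n(K_0)\geq \alpha>0$ for all $n$, and for each $n$ one can pick $x_n\in X$ with $\mu^n_{x_n}(K_0)\geq \alpha$, where $\mu^n_x$ denotes the push-forward of $\mu^n$ under $t\mapsto tx$. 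I would then form Cesàro averages $\sigma_n:=\frac{1}{n}\sum_{j=1}^n \mu^j * \eta_n$ for suitably chosen probability measures $\eta_n$ on $X$. The telescoping identity $\mu * \sigma_n-\sigma_n=\frac{1}{n}(\mu^{n+1} * \eta_n-\mu * \eta_n)$ has total variation $O(1/n)$, so any weak-$*$ subsequential limit $\sigma$ of $(\sigma_n)$ in the cone of bounded positive Radon measures on $X$ satisfies $\mu * \sigma=\sigma$. Once shown to be nonzero, $\sigma/\sigma(X)$ is a $\mu$-stationary probability measure on $X$, contradicting (2).

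The crux — and the main obstacle — is exactly showing that this weak-$*$ limit $\sigma$ is nonzero, since a priori all the mass of $(\sigma_n)$ can escape to infinity. This is where the spread-out hypothesis must be used essentially: with the naive choice $\eta_n=\delta_{x_n}$ one only gets $\sigma_n(K_0)\geq \alpha/n$, which is too weak. Writing $\mu^N\geq f\cdot m_G$ with $f\in L^1(G)_+$ nonzero and (after truncation) compactly supported, convolution with $f$ acts as a smoothing operator on measures on $X$ which, together with irreducibility of $\mu$, should allow one to prove a persistence statement of the shape: there exist $\beta>0$ and a compact $K\supseteq K_0$ such that $\mu^n_x(K_0)\geq \alpha$ implies $\mu^j_x(K)\geq \beta$ for a positive-density set of indices $j\in\{1,\dots,n\}$. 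Granted this, a suitable choice of $\eta_n$ (concentrated near $x_n$) gives $\sigma_n(K)\geq \beta$ uniformly in $n$, and the standard portmanteau inequality $\sigma(K)\geq \limsup \sigma_n(K)$ for compact $K$ under weak-$*$ convergence then forces $\sigma(K)\geq \beta>0$. I expect this persistence lemma — propagating concentration backwards in time along the orbit of $x_n$ by exploiting the absolutely continuous component of the iterates together with irreducibility — to be the genuinely delicate step, and the one place where spread-outness, rather than mere irreducibility, is indispensable.
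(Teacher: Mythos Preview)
Your argument for $(1)\Rightarrow(2)$ is correct and essentially the same as the paper's.

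For $(2)\Rightarrow(1)$ your Ces\`aro/Krylov--Bogolyubov strategy is natural, and you correctly isolate the real obstacle: preventing escape of mass. But the persistence lemma you formulate --- that $\mu^n_x(K_0)\geq\alpha$ forces $\mu^j_x(K)\geq\beta$ on a positive-density set of $j\leq n$ --- is left entirely unproved, and propagating concentration \emph{backward} along the walk from the smoothing provided by a single absolutely continuous convolution power is not straightforward; the sketch you give does not make clear how positive density (rather than, say, a set of $O(1)$ indices near $n$) would emerge. As it stands this is a genuine gap, and it is not evident that the lemma even holds in the stated form.

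The paper bypasses this difficulty by a different mechanism. The spread-out and irreducibility hypotheses are used, via Foguel's 0--2 law, only to produce a single integer $k$ with $\|\mu^{n+k}-\mu^n\|_1\to 0$. Separately, a Banach-limit argument shows that absence of a $\mu$-stationary probability measure forces, along a subnet $(n_j)$, $\int(\mu^{n_j+s}*\phi)\,d\nu\to 0$ for every $\phi\in C_c(X)$, $\nu\in\mathrm{Prob}(X)$ and $s\in\mathbb N$. Now if $\|\mu^n*\psi\|_\infty\geq\alpha$ for all $n$, choose $\nu_n$ with $\int(\mu^n*\psi)\,d\nu_n>\alpha$ and let $\rho$ be a weak-$*$ accumulation point of $\mu^n*\nu_n$: since $\psi\in C_c(X)$ one gets $\int\psi\,d\rho\geq\alpha$ automatically --- no persistence lemma is needed --- and the 0--2 law yields $\mu^k*\rho=\rho$, hence $\int(\mu^{mk}*\psi)\,d\rho>\alpha$ for every $m$, contradicting the Banach-limit step. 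The point is that by taking the limit of $\mu^n*\nu_n$ rather than a Ces\`aro average from a fixed base point, non-vanishing comes for free, while stationarity (along the subsequence $k\mathbb N$) is supplied by the 0--2 law instead of by averaging.
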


\section{Proof of the main result}

For the proof of our theorem we need the following lemma which we believe should be known to the experts.
 
\begin{lemma}\label{0-2}
Let $\mu$ be an irreducible and spread-out probability measure on the locally compact group $G$. 
Then there exists $k\in\mathbb{N}$ such that
\[
\lim_n \|\,\mu^{n+k} \, -\, \mu^n\,\|_{1} \, =\,0\,.
\]
\end{lemma}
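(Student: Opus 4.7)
The plan is to follow the pattern of the classical 0-2 law for spread-out random walks on locally compact groups. First I would establish monotonicity in $n$: writing $a_n(k) := \|\mu^{n+k} - \mu^n\|_1$, the identity $a_{n+1}(k) = \|\mu*(\mu^{n+k} - \mu^n)\|_1$, together with the fact that convolution with a probability measure is a contraction on the measure algebra $M(G)$, gives $a_{n+1}(k) \leq a_n(k)$. Hence the limit $\ell(k) := \lim_n a_n(k)$ exists in $[0,2]$ for every $k \geq 1$, and the lemma reduces to showing that $\ell(k) = 0$ for some $k$.

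Next I would use the spread-out hypothesis to extract absolutely continuous structure. By Lebesgue decomposition and by taking further convolution powers to smoothen the density, there exist an integer $N \geq 1$, a constant $\alpha \in (0,1]$, and a nonnegative continuous compactly supported function $h$ on $G$ with $\int h \, dm = 1$ (where $m$ denotes Haar measure) such that $\mu^N \geq \alpha\, h \, dm$, with $h$ strictly positive on some nonempty open set $V$. Writing $\mu^N = \alpha (h\, dm) + (1-\alpha)\sigma$ for some probability measure $\sigma$, one obtains the decomposition
\[
\mu^{N+k} - \mu^N \,=\, \alpha\bigl(\mu^k * h - h\bigr)\,dm \,+\, (1-\alpha)(\mu^k * \sigma - \sigma),
\]
which is the basic identity to exploit.

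The third step invokes irreducibility to select the right $k$. Since the semigroup generated by $\mathrm{supp}(\mu)$ is dense in $G$, one can find some $k$ and a point $s \in \mathrm{supp}(\mu^k)$ such that $sV \cap V$ has positive Haar measure; because $V$ is open, this holds throughout a neighborhood of $s$, so $\mu^k$ sees it with positive mass. For such a $k$ the densities $\mu^k * h$ and $h$ overlap on a set of positive measure, giving $\|\mu^k * h - h\|_1 < 2$ strictly. Feeding this into the decomposition above, together with the monotonicity of the first step, yields $\ell(k) < 2$. To upgrade the strict inequality to $\ell(k) = 0$ I would apply a Foguel--Derriennic-type 0-2 dichotomy for positive $L^1$-contractions: iteration of the convolution operator increases the absolutely continuous overlap between $\mu^n$ and $\mu^{n+k}$ whenever any such overlap is present, so no accumulation at a value strictly between $0$ and $2$ is possible.

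The main obstacle I anticipate lies in controlling the singular part of the $\mu^n$ as $n \to \infty$. The decomposition above isolates the absolutely continuous and singular pieces cleanly only at level $N$; after further convolutions the singular part interacts with everything. One must show that the contribution of the singular parts to $\|\mu^{n+k} - \mu^n\|_1$ vanishes asymptotically, which ultimately relies on the interplay between the spread-out assumption (which guarantees absolutely continuous mass at each step) and irreducibility (which prevents that mass from staying confined to a proper closed subsemigroup). Careful bookkeeping of this decomposition along the convolution semigroup is the technical heart of the argument.
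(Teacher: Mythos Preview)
Your approach is essentially the same as the paper's: decompose a convolution power using the spread-out hypothesis to isolate an absolutely continuous piece, use irreducibility to find a $k$ for which the shifted density overlaps the original one so that $\|\mu^{N+k}-\mu^N\|_1<2$, and then invoke Foguel's 0--2 law. The only difference is one of economy: the paper does not carry out the ``careful bookkeeping'' of singular parts you anticipate in your final paragraph, because it is unnecessary---Foguel's theorem (cited there as \cite[Theorem I]{Fog}) is applied as a black box, and the single strict inequality $\|\mu^{m+k}-\mu^m\|_1<2$ at one level $m$ already forces $\lim_n\|\mu^{n+k}-\mu^n\|_1=0$ without any further analysis of the singular components.
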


\begin{proof}
Since $\mu$ is spread-out, there exists $m\in\mathbb{N}$ such that the $m$-th convolution power
$\mu^m$ can be decomposed as $\mu^m = f+\nu$, where $0\neq f\in L^1(G)^+$ and $\nu\in M(G)^+$.
Let $S \subseteq G$ be the support of $f$ (i.e. the complement of the union of
all open subsets of $G$ on which $f$ is almost everywhere zero).
Consider the continuous function $t \mapsto \int_S f(t^{-1}r)\  d\mu(r)$, since 
$\mu$ is irreducible there exists
$k\in\mathbb{N}$ such that
\[
\int_S \,\mu^k * f(r)\, d\mu(r) \,=\, \int_S \,\int_G\, f(t^{-1}r)\, d\mu^k(t)\, d\mu(r) \,=\, \int_G \,\int_S\, f(t^{-1}r)\, d\mu(r)\, d\mu^k(t)\, >\, 0\,.
\]
It follows that if $S'\subseteq G$ denotes the support of $\mu^k * f$, then $S\cap S' \neq \emptyset$.
This implies
\[
\|\, f\, -\, \mu^k * f\,\|_1 \,<\, \|\, f \,\|_1 \,+\, \|\,\mu^k * f\,\|_1 \,\leq \, 2 \,\|f\|_1 ,
\]
and therefore
\begin{eqnarray*}
\|\, \mu^m \,-\, \mu^{m+k}\,\|_1 
&=& \|\, f\,+\,\nu\, -\, (f+\nu) * \mu^{k}\,\|_1
 \\ &\leq &
\|\, f\, -\, f * \mu^{k}\,\|_1 \,+ \,\|\, \nu\, -\, \nu * \mu^{k}\,\|_1
\\&<&
 2 \,\|\,f\,\|_1 \,+\, 2\, \|\,\nu\,\|_1 \,=\, 2\,.
\end{eqnarray*}
Hence the result follows from Foguel's 0-2 law \cite[Theorem I]{Fog}.
\end{proof}

\noindent
{\it Proof of Theorem \ref{main}.} $(1) \Rightarrow (2):$ suppose for the sake of contradiction that $\lambda$
is a nonzero $\mu$-stationary measure on $X$. 
Since $\lambda$ is regular, we may find a compact subset $K\subseteq X$ such that $\lambda(K) > 0$. 
Moreover, since $\lambda$ is $\mu$-stationary we have
\[\begin{array}{lll}
\displaystyle
\mathcal{F}_n(K) \,=\, \sup_{x\in X} \, \mu^n(K_x) &\geq & \int_X \, \mu^n(K_x) \, d\lambda(x) \\
&=& \int_X\,\int_G \,\mathds{1}_{K}(tx) \, d\mu^n(t)\,d\lambda(x) \\ &=& 
\int_X\,\mathds{1}_{K}(x) \, d\lambda(x) \\ &=& \lambda(K)\,.
\end{array}\]
Hence the concentration functions $(\mathcal{F}_n)$ do not converge to zero.

\noindent
$(2) \Rightarrow (1):$ Fix a probability measure $\nu$ on $X$. For $\phi \in C_b(X)$ and $n\in\Bbb N$, set
\[
a^{(\phi )}_{n} \,:= \,\int_X \,\int_G \, \phi(tx) d\mu^n(t) \, d\nu(x) \,,
\]
and consider the sequence $(a^{(\phi)}_{n}) \in \ell^\infty(\Bbb N)$.
Note that $a^{(\mu * \phi)}_{n} = a^{(\phi )}_{n+1}$.

Now, let $F$ be a shift invariant positive linear functional on $\ell^\infty(\Bbb N)$ that
extend the limit (c.f. \cite[Theorem 7.1]{Co}).
Define a positive linear functional $\Lambda$ on $C_b(X)$ by
$\Lambda(\phi) \,=\, F(\,(a^{(\phi)}_{n})\,)$.

If $\Lambda$ is not zero on $C_c(X)$, then Riesz representation theorem implies that
there exists a probability measure $\lambda$ on $X$ such that
$\int_X \phi \,d\lambda = \Lambda(\phi)$ for $\phi\in C_c(X)$. Moreover, since
$F$ is shift invariant, it follows
\begin{eqnarray*}
\int_X\,\int_G \, \phi(t x) \, d\mu(t)\,d\lambda(x) &=& \Lambda (\mu * \phi) \,=\, F(\,(a^{(\mu * \phi)}_{n})\,) \\ &=& F(\,(a^{(\phi)}_{n+1})\,)
\,=\, F(\,(a^{(\phi)}_{n})\,) \\ &=& \Lambda(\phi) \,=\, \int_X \,\phi(x)\, d\lambda(x) ,
\end{eqnarray*}
which shows that $\lambda$ is $\mu$-stationary.
But since by the assumption $X$ does not admit a $\mu$-stationary probability measure,
we conclude that $\Lambda$ is zero on $C_c(X)$.

From the properties of $F$ (c.f. \cite[Theorem 7.1]{Co}), it then follows for $\phi\in C_c(X)^+$ that
\[
0 \,\leq\, \liminf_n\,{\int_X\int_G \phi(t x) \, d\mu^n(t)\, d\nu(x)}\, \leq\, F(\,(a^{(\phi)}_{n})\,) \,=\, \Lambda(\phi) \,=\, 0\,.
\]
This implies there is a subnet $(\mu^{n_i})$ of $(\mu^{n})$ such that
\begin{equation}\label{00}
\lim_i \, {\int_X \, \int_G \, \phi(t x) \, d\mu^{n_i}(t) \, d\nu(x)} \,=\, 0\,.
\end{equation}
Now, given any finite collections $\nu_1, \nu_2, \dots, \nu_{m_1} \in Prob(X)$ and $\phi_1, \phi_2, \dots, \phi_{m_2} \in C_c(X)^+$,
by applying \eqref{00} to $\displaystyle\nu = \frac{1}{m_1}\sum_{m=1}^{m_1} \nu_m$ and $\displaystyle \phi = \sum_{m'=1}^{m_2} \phi_{m'}$
we get
\[
\lim_i \, {\int_X \, \int_G \, \phi_{m'}(tx) \, d\mu^{n_i}(t) \, d\nu_m(x)} \,=\, 0
\]
for all $1\leq m \leq m_1$ and $1\leq m' \leq m_2$.
Hence, we can construct a subnet $(\mu^{n_j})$ of $(\mu^{n})$ such that (\ref{00}) holds for all $\phi\in C_c(X)$ and $\nu\in Prob(X)$.
Moreover, for $s\in\mathbb{N}$, replacing $\phi$ by $\mu^s * \phi$ in (\ref{00}), we conclude
\begin{equation}\label{01}
\lim_j\, {\int_X \, \int_G \, \phi(t x)\, d\mu^{n_j+s}(t)\, d\nu(x)} \,= \,0 
\end{equation}
for all $\phi\in C_c(X)$ and $\nu\in Prob(X)$.

Now fix $\psi\in C_c(X)^+$.  
Since the sequence $(\|\mu^n * \psi\|_\infty)$ is positive and decreasing, it has a limit.
We claim that the limit is zero.

To prove the claim, suppose for the sake of contradiction that $\|\mu^n * \psi\|_\infty > \alpha > 0$ for all $n\in\Bbb N$.
Then for every $n\in\Bbb N$ there is a probability measure $\nu_n\in Prob(X)$ such that
\[
\int_X \, \int_G \, \psi(t x)\, d\mu^{n}(t)\, d\nu_n(x) \,>\, \alpha \,.
\]
Since $\| \mu^n * \nu_n \|_1 \leq 1$, 
it follows from the Banach--Alaoglu Theorem there is a subnet $( \mu^{n_i} * \nu_{n_i} )$,
and a measure $\rho$ on $X$ such that
\[
{\int_X \, \phi(x)\, d\mu^{n_i} * \nu_{n_i}(x)} \,\longrightarrow \,\int_X \, \phi(x)\, d\rho(x)
\]
for all $\phi\in C_c(X)$.
Therefore, if we let $k\in\Bbb N$ be as in Lemma \ref{0-2}, we get 
\[  \begin{array}{lll}
\mid \int_X \,\int_G \, \phi(tx)\, d\mu^{k}(t)\,d\rho(x) - \int_X \, \phi(x)\,d\rho(x)\mid
& = & \lim_i \mid \int_X \, \phi(x)\, d\mu^{n_i+k} * \nu_{n_i}(x) - \int_X \, \phi(x)\, d\mu^{n_i} * \nu_{n_i}(x)\mid\\
& = & \lim_i \mid \int_X \, \phi(x)\, d\big [\mu^{n_i+k} * \nu_{n_i} - \mu^{n_i} * \nu_{n_i}\big ](x)\mid\\
& \leq & \|\phi\|_\infty \, \lim_i {\|\,\mu^{n_i+k}- \mu^{n_i}\, \|_{1}}.\\
& = & 0
\end{array} \]
for all $\phi\in C_c(X)$.
Hence for every $m\in \Bbb N$, 
\[\begin{array}{lll}
\int_X\, \int_G\, \psi(t x)\,d\mu^{mk}(t)\,d\rho(x) 
&=& 
\int_X\, \int_G\, \psi(t x)\,d\mu^{(m-1) k}(t)\,d \mu^k*\rho(x) 
\\ &=&
\int_X\, \int_G\, \psi(t x)\,d\mu^{(m-1) k}(t)\,d \rho(x) 
\\ &=&
\cdots
\\ &=&
\int_X\, \int_G\, \psi(t x)\,d\mu^{k}(t)\,d \rho(x) 
\\ &=&\int_X\, \psi(x)\,d\rho(x)
\\ &>& 
\alpha \,.
\end{array}\]
On the other hand, by (\ref{01}) we can find an $n_{j_0}$ large enough so that 
${\int_X \, \int_G \, \psi(t x)\, d\mu^{n_{j_0}+s}(t)\, d\nu(x)} \,<\, \alpha/2$,
for every $s = 1, ..., k$.
But $n_{j_0}+s_0 = km$ for some $1\leq s_0\leq k$ and $m\in\Bbb N$, and therefore
\[
\alpha \,<\, \int_X \,\int_G\, \psi(t x)\,d\mu^{mk}(t) \,d\rho(x) \,=\, \int_X \,\int_G\, \psi(t x)\,d\mu^{n_{j_0}+s_0}(x) \,d\rho(x) \,<\, {\alpha}/{2} \,.
\]
This contradiction yields the claim.

To finish the proof, take a compact $K\subset X$. Applying the Urysohn's Lemma,
we can construct $\phi\in C_c(X)$ such that $\phi = 1$ on $K$, and therefore
\[  \begin{array}{cll}
\displaystyle
\sup_{x\in X}\,\mu^n(K_x) & = & \displaystyle\sup_{x\in X}\,\int_G\,\mathds{1}_{K}(t x)\,d\mu^n(t) \\
& \leq &\displaystyle \sup_{x\in X}\,\int_G\,\phi(t x)\,d\mu^n(t)
\end{array} \] 
which goes to zero by the claim. \hfill $\qed$

\begin{remark}
\emph{If $G$ is compact, then every continuous action of $G$ on any locally compact space admits stationary probability measures.
In fact, let $\om$ be the Haar probability measure on $G$. Then it is easily seen that for any $\nu\in Prob(X)$, the convolution $\om * \nu$
is $\mu$-stationary measure for all $\mu\in Prob(G)$.}

\emph{On the other hand, if $X$ is compact, it is well-known that
any continuous action of a locally compact group $G$ on $X$ admits stationary measures.}

\emph{But there also exist examples of continuous actions $G\curvearrowright X$ admitting stationary measures,
and neither $G$ nor $X$ is compact.
For example, let $G=SL(2,\Bbb R)$ and $X =  G/\Gamma$, where $\Gamma=SL(2,\Bbb Z)$,
and consider the action $G\curvearrowright X$. 
It is well-known that $\Gamma$ is a non-uniform lattice in $G$, i.e. the homogeneous space $X$ is non-compact 
and $X$ has a $G$-invariant probability measure.
Note that this action is transitive, hence both
ergodic and minimal.}
\end{remark}


\begin{thebibliography}{9}


\bibitem{Co} J. B. Conway,
{\it A course in functional analysis}, Springer, New York, 1990.

\bibitem{Fog} S. R. Foguel,
{\it More on the ``zero-two'' law},
Proc. Amer. Math. Soc. {\bf 61} (1976), 262--264.

\bibitem{HM} K.H. Hofmann \and A. Mukherjea,
{\it Concentration functions and a class of noncompact groups}, Math. Ann. \textbf {256} (1981), no. 4, 535--548.


\bibitem {JRW}
W. Jaworski, J. Rosenblatt \and G. Willis, 
\textit{Concentration functions in locally compact groups},
Math. Ann. {\bf 305} (1996), no. 4, 673--691.



\end{thebibliography}
\end{document}